\documentclass[preprint,12pt]{article}

\usepackage{amsthm,amsmath,amssymb}
\usepackage{natbib}
\pdfoutput=1
\usepackage[toc,page]{appendix}
\usepackage{xcolor}
\usepackage{tikz}
\usepackage{comment}
\usepackage{float}
\usepackage{bm}
\usepackage{graphicx}
\usepackage{dsfont}
\usepackage{enumerate}
\usepackage{pst-all}
\usepackage[margin=0.75in]{geometry}
\usepackage{pgfplots}

\pgfplotsset{width=10cm,compat=1.9}
\usetikzlibrary{patterns}
\usepgfplotslibrary{external}

\newtheorem{thm}{Theorem}
\newtheorem{prop}{Proposition}
\newtheorem{lem}{Lemma}

\newcommand{\ind}[1]{\,\mathds{1}_{\{#1\}}}

\newcommand{\Var}{{\rm Var}}

\title{Irregularly observed long-memory L\'evy-driven moving average processes}

\author{
Mohamedou Ould Haye$^1$\footnote{Corresponding author: mohamedouhaye@cunet.carleton.ca}
\,and Anne Philippe$^2$
}

\date{
$^1$\small School of Mathematics and Statistics.\\
Carleton University, 1125 Colonel By Dr. Ottawa, ON, Canada, K1S 5B6\\
$^2$ Nantes Universit\'e, CNRS, Laboratoire de Math\'ematiques Jean Leray, LMJL,
UMR 6629, F-44000 Nantes, France.
}

\begin{document}

\maketitle

\begin{abstract}
We study long-memory continuous-time moving-average processes driven by a L\'evy process and observed at random renewal times. The sampling scheme introduces an additional source of randomness through irregular observation times. We establish the asymptotic behaviour of normalized partial sums under both finite- and infinite-mean renewal sampling. 
\end{abstract}

\textbf{Keywords:} Irregular time series, Continuous moving average processes, L\'evy processes, Long memory, Renewal sampling, Heavy tails.
\section{Introduction}\label{Introduction}

Irregularly observed time series arise naturally in many fields, including astronomy \cite{10.1093rastirzac011}, signal processing \cite{SUN2024110075}, environmental sciences \cite{Beelaerts2010TimeSeriesRF}, and biomedical sciences \cite{Shukla2018ModelingIS}. More recently, irregular observation schemes have also been used in differential privacy procedures to enhance data confidentiality; see \cite{pmlr-v151-koga22a}. A common approach to modeling irregularly observed time series is to embed the observations in a continuous-time process sampled at deterministic or random time points.

Random observation schemes can substantially alter some of the probabilistic properties of the underlying continuous-time process. In particular, the dependence structure, joint distributions, and asymptotic behaviour may all be affected by the randomness of the observation mechanism.

Discrete-time linear processes constitute a large body of the time series literature.
Their continuous-time counterparts are L\'evy-driven continuous-time moving average processes. See for instance \cite{Brockwell2001} for continuous-time ARMA processes and \cite{BrockwellFerrazzanoKlueppelberg2013,CohenLindner2013,BAI20161036} for a more general context. 
These processes are defined as stochastic integrals of a deterministic function, called the kernel, with respect to a L\'evy process. More precisely, we consider processes of the form
\begin{equation}\label{CMA}
X_t=\int_{-\infty}^{t} a(t-s)\,dL_s, \qquad t\in {\bf R}^{+},
\end{equation}
where $L=(L_t)_{t\in {\bf R}}$ is a centered L\'evy process with finite second moment satisfying
\begin{equation*}
{\rm Var}(L_1)=1,
\end{equation*}
and where the kernel $a$ has support on ${\bf R}^{+}$ and satisfies
\begin{equation}\label{finite 2}
\int_0^\infty a^2(t)\,dt<\infty.
\end{equation}
In this paper, we focus on the class of long-memory kernels given by
\begin{equation}\label{a}
a(t)\sim c(d)(1+t)^{d-1}, \qquad t\to\infty, \qquad c(d)\neq0,
\end{equation}
where $0<d<1/2$. Under this condition, $(X_t)$ exhibits long-range dependence, with memory parameter $d$.\\
We study the sampled process $X_t$ at random time points defined by
\begin{equation}\label{sampled1}
Y_k=X_{T_k}, \qquad k=1,2,\ldots.
\end{equation}
where  
 $(T_k)$ denotes a renewal process
 \begin{equation*}
T_0=0, \qquad T_k=\sum_{i=1}^{k}\Delta_i,
\end{equation*}
and $(\Delta_i)$ are iid real-valued positive random variables. 
In the financial literature, processes obtained through random time changes \eqref{sampled1} are commonly referred to as time-changed processes; see, for example, \cite{GemanAne2000,GemanMadanYor2001}. The process $(Y_k)$ may be viewed as a discrete-observation analogue of such models, where the underlying process is observed only at the random epochs of a renewal process.

Several aspects of renewal sampling for long-memory processes have recently been investigated. For example, \cite{PhilippeRobetViano} showed that renewal sampling generally destroys Gaussianity. Moreover, if the renewal process has finite mean, suitably normalized partial sums of the sampled process still converge to fractional Brownian motion. Heavy-tailed renewal schemes were further investigated in \cite{philippe-viano2008}. In particular, they show that the memory parameter can be reduced if the renewal process has infinite moment. Statistical inference and covariance asymptotics for sampled long-memory processes were studied in \cite{OuldHaye2023}.

Throughout the paper we assume that the renewal increments satisfy one of the following conditions.

\begin{enumerate}
\item[(A1)] Finite-mean case:
$$
{\bf E}(\Delta_1)<\infty.
$$

\item[(A2)] Heavy-tailed case:
$$
P(\Delta_1>x)=x^{-\alpha}\ell(x), \qquad x\ge1,
$$
where $0<\alpha\le1$ and $\ell$ is slowly varying at infinity, and that ${\bf E}(\Delta_1)=\infty$.
\end{enumerate}
In this paper we study the asymptotic behaviour of the partial sums
\begin{equation}\label{Sndef}
S_n=
\sum_{i=1}^{n}Y_i =
\sum_{i=1}^{n}X_{T_i}
=\int_{-\infty}^{\infty}
\sum_{i=1}^{n}a(T_i-s)\,dL_s,
\end{equation}
where the kernel $a$ is given by \eqref{a}. \cite{CohenLindner2013} studied \eqref{Sndef} in the case of a short memory process $X_t$ and deterministic sequence $T_i=\Delta i$ where $\Delta\in\mathbb{R}^+$.  When $X_t$ is a long memory linear process  and  $T_k$ are random,   \cite{ouldhaye2024asymptoticsirregularlyobservedlong}  established the limiting distribution of suitably normalized partial sums \eqref{Sndef} under both finite-mean {\rm(A1)} and infinite-mean renewal sampling {\rm(A2)}. In the finite-mean case, the limiting distribution is Gaussian. In contrast, under heavy-tailed renewal sampling, the limit may be a normal variance mixture whose random variance component is determined by the interplay between the heavy-tail index $\alpha$ and the long-memory parameter $d$.

The objective of the present paper is to extend this theory to
continuous-time L\'evy-driven moving-average processes. Although the limiting
distributions coincide with those obtained in the discrete-time framework, the
extension is far from routine. Several key ingredients of the discrete-time
proof are no longer available in the continuous-time setting.

The proof proceeds in two main steps. First, we establish in
Section~\ref{lindeberg2} a continuous-time Lindeberg proposition for stochastic
integrals of the form
$$
\int_{-\infty}^{\infty}f_n(s)\,dL_s,
$$
which provides  sufficient conditions for their asymptotic normality. We then
show that the normalized partial sums of the sampled process satisfy these
conditions. The remaining task is to analyze the random normalization. Much of
the corresponding discrete-time proof can be adapted to the present setting,
although new arguments are required in the finite-mean renewal case and in the
boundary case $\alpha=1-2d$. The main result is presented in Section~\ref{sec:main}. Section~\ref{parts} contains the remaining technical arguments required for its proof, together with the proofs of the auxiliary lemmas.
\section{Asymptotic normality of a sequence of a L\'evy-driven process}\label{lindeberg2}
We consider a stochastic process of the form
$$
Z_n:=\int_{-\infty}^\infty f_n(s)dL_s,\qquad n=1,2,\ldots,
$$
where $f_n\in L^2(\mathbb{R}).$ We establish a sufficient condition on $f_n$ to ensure the asymptotic normalilty of $Z_n$ (see Proposition \ref{lindeberg1}). The following lemma gives an explicit form of the characteristic function of the process $Z_n$ in terms of the characteristic 
 exponent $\Psi$ of $L_t$,   that satisfies that  for all $u,t\in\mathbb{R}$,
$$
\mathbb{E}\left(e^{iuL_t}\right)
=
e^{t\Psi(u)}. 
$$
Since $\mathbb{E}(L_1)=0$ and $\mathbb{E}(L_1)=1$, 
$$
\frac{\Psi(u)}{u^2}\to-\frac{1}{2},\qquad\textrm{as }u\to0.
$$
Also, from the proof of Theorem 8.1 of \cite{Sato1999}, we have
$$
\frac{\Psi(u)}{u^2}\to-\frac{\sigma^2}{2},\qquad\textrm{as }u\to\infty,
$$
where $\sigma^2$ is the diffusion coefficient of $L_t$ (i.e., $\sigma^2\ge0$ is  the variance of the Gaussian component of $L_t$). Since $\Psi$ is continuous, we can write
$$
\Psi(u)
=
-\frac{u^2}{2}\left(1+\epsilon(u)\right),
$$
where $\epsilon(u)\to0$ as $u\to0$ and $\epsilon(u)$ is bounded, say by $C$.
\begin{lem}\label{representation}
Let $g$ be an $L^2(\mathbb{R})$ real-valued function. Then
$$
\mathbb{E}\left[\exp\left(i\int_{-\infty}^\infty g(s)dL_s\right)\right]=\exp\left(\int_{-\infty}^\infty\Psi(g(s))ds\right).
$$
\end{lem}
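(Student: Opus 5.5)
The plan is to prove the identity first for step functions, then pass to general $g\in L^2(\mathbb{R})$ by an $L^2$ approximation on both sides.

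\emph{Step functions.} Let $g=\sum_{j=1}^m c_j\ind{(t_{j-1},t_j]}$ with $t_0<\dots<t_m$. By definition of the stochastic integral,
$$
\int g\,dL=\sum_{j} c_j\,(L_{t_j}-L_{t_{j-1}}).
$$
The increments of a L\'evy process (two-sided, obtained by gluing an independent copy for negative times) are independent. By stationarity, $L_{t_j}-L_{t_{j-1}}$ has the law of $L_{t_j-t_{j-1}}$. Hence
$$
\mathbb{E}\,e^{i\int g\,dL}=\prod_j e^{(t_j-t_{j-1})\Psi(c_j)}=\exp\Big(\int\Psi(g(s))\,ds\Big),
$$
using $\Psi(0)=0$ off the support of $g$. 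This step is routine.

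\emph{Approximation.} For general $g\in L^2$, choose step functions $g_k\to g$ in $L^2$. Since $L$ is centered with ${\rm Var}(L_1)=1$, the integral is an isometry, $\mathbb{E}\big(\int(g_k-g)\,dL\big)^2=\|g_k-g\|_2^2\to0$; indeed this is how $\int g\,dL$ is defined for $L^2$ integrands. So $\int g_k\,dL\to\int g\,dL$ in $L^2$, hence in law, and the left-hand characteristic functions converge.

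\emph{Main obstacle: the right-hand side.} I must show $\int\Psi(g_k)\to\int\Psi(g)$, including that $\int\Psi(g)$ is well defined. From the representation $\Psi(u)=-\tfrac{u^2}{2}(1+\epsilon(u))$ with $|\epsilon|\le C$, we get $|\Psi(u)|\le \tfrac{1+C}{2}u^2$. So $\Psi(g)\in L^1$, and the integral exists.

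For convergence I would use the L\'evy--Khintchine form $\Psi(u)=-\tfrac{\sigma^2u^2}{2}+\int(e^{iux}-1-iux)\,\nu(dx)$, valid since $L$ is centered with a finite second moment. It gives $|\Psi'(u)|\le(\sigma^2+\int x^2\,\nu(dx))|u|=|u|$, using $|e^{iux}-1|\le|ux|$. Therefore
$$
|\Psi(u)-\Psi(v)|\le (|u|+|v|)\,|u-v|.
$$
By Cauchy--Schwarz,
$$
\int|\Psi(g_k)-\Psi(g)|\le\big(\|g_k\|_2+\|g\|_2\big)\,\|g_k-g\|_2\to0.
$$
Alternatively, pass to an a.e.-convergent subsequence and apply the generalized dominated convergence theorem with dominating functions $\tfrac{1+C}{2}g_k^2$, which converge in $L^1$.

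Letting $k\to\infty$ on both sides then gives the stated formula.
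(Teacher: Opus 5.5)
Your proof is correct and follows the same route as the paper: verify the identity for step functions on intervals $(a_j,b_j]$ using independent, stationary increments, then approximate $g$ in $L^2$, with the isometry handling the left side and $L^1$ convergence of $\Psi(g_k)$ handling the right side. The only difference is that your Lipschitz-type bound $|\Psi(u)-\Psi(v)|\le(|u|+|v|)|u-v|$, derived from the L\'evy--Khintchine form, gives a cleaner quantitative justification of the $L^1$ step, which the paper handles by appealing to uniform integrability from $|\Psi(x)|\le Cx^2$.
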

The following proposition is a continuous-time analogue of the Lindeberg theorem  and will be the main tool for proving asymptotic normality of the normalized partial sums.
\begin{prop}\label{lindeberg1}

Let $L_s$ be a L\'evy process with mean zero, finite variance, and $\textrm{Var}(L_1)=1$ and let $f_n(s)$ be an $L^2(\mathbb{R})$-real valued  sequence of functions such
$$
\int_{-\infty}^\infty f_n^2(s)ds\to1,\qquad\textrm{as }n\to\infty,
$$
and  for each $\eta>0$
$$
\int_{\vert f_n(s)\vert>\eta}f_n^2(s)ds\to0\quad\textrm{as }n\to\infty.
$$
Then
\begin{equation}\label{normality}
\int_{-\infty}^\infty f_n(s)dL_s\overset{d}{\longrightarrow}\mathcal{N}(0,1).
\end{equation}
\end{prop}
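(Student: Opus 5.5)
The plan is to use characteristic functions together with Lemma \ref{representation}. Fix $u\in\mathbb{R}$ and apply the lemma with $g=uf_n$:
$$
\mathbb{E}\left[\exp\left(iu\int_{-\infty}^\infty f_n(s)dL_s\right)\right]=\exp\left(\int_{-\infty}^\infty\Psi(uf_n(s))ds\right).
$$
By L\'evy's continuity theorem it is enough to show that $\int\Psi(uf_n(s))ds\to-u^2/2$ for every fixed $u$. We substitute the decomposition $\Psi(v)=-\frac{v^2}{2}(1+\epsilon(v))$, in which $\epsilon(v)\to0$ as $v\to0$ and $|\epsilon|\le C$. This gives
$$
\int_{-\infty}^\infty\Psi(uf_n(s))ds=-\frac{u^2}{2}\int_{-\infty}^\infty f_n^2(s)ds-\frac{u^2}{2}\int_{-\infty}^\infty f_n^2(s)\,\epsilon(uf_n(s))ds .
$$
The first hypothesis sends the first term to $-u^2/2$, so the whole proof reduces to showing that the remainder $R_n:=\int f_n^2(s)\epsilon(uf_n(s))ds$ tends to $0$.

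To handle $R_n$, fix $\delta>0$ and, for $u\neq0$ (the case $u=0$ is trivial), choose $\eta>0$ such that $|\epsilon(v)|\le\delta$ whenever $|v|\le |u|\eta$. We split the integral according to whether $|f_n(s)|\le\eta$ or $|f_n(s)|>\eta$. On the set $\{|f_n|\le\eta\}$ the integrand is bounded by $\delta f_n^2$, so its contribution is at most $\delta\int f_n^2\to\delta$. On the set $\{|f_n|>\eta\}$ we use the global bound $|\epsilon|\le C$, so the contribution is at most $C\int_{|f_n|>\eta}f_n^2(s)ds$, which tends to $0$ by the Lindeberg-type hypothesis. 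Hence $\limsup_n|R_n|\le\delta$, and since $\delta$ is arbitrary, $R_n\to0$.

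The only step I expect to need care is justifying the global boundedness of $\epsilon$, since this is what makes the large-$|f_n|$ region controllable; everything else is routine. The boundedness follows from the facts recorded before Lemma \ref{representation}. The function $\epsilon(v)=-2\Psi(v)/v^2-1$ is continuous on $\mathbb{R}\setminus\{0\}$. It has a finite limit as $v\to0$, because $\Psi$ has finite second moment with $\mathrm{Var}(L_1)=1$. It also has a finite limit as $|v|\to\infty$, namely $\sigma^2-1$. Alternatively, we can bound it directly via the L\'evy--Khintchine formula: $|\Psi(v)|\le \frac{\sigma^2v^2}{2}+\int(1-\cos(vx))\nu(dx)+\int|\sin(vx)-vx|\nu(dx)\le C'v^2$, using $\int x^2\nu(dx)<\infty$ and the centering. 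Combining the two steps, the characteristic functions converge to $e^{-u^2/2}$, which yields \eqref{normality}.
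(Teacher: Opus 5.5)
Your proposal is correct and follows essentially the same route as the paper: characteristic functions via Lemma~\ref{representation}, the decomposition $\Psi(v)=-\frac{v^2}{2}(1+\epsilon(v))$, and the split of the remainder over $\{|f_n|\le\eta\}$ and $\{|f_n|>\eta\}$. On the first set you use $\epsilon(v)\to0$ as $v\to0$, and on the second the global bound $|\epsilon|\le C$ together with the Lindeberg-type hypothesis. Your $\delta$--$\eta$ bookkeeping, which gives $\limsup_n$ of the remainder at most $\delta$, and your explicit L\'evy--Khintchine bound $|\Psi(v)|\le C'v^2$ make precise two points the paper only asserts: the order of the limits in $\eta$ and $n$, and the boundedness of $\epsilon$.
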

\begin{proof}
    
Using Lemma \ref{representation},  the characteristic function of the left-hand side of \eqref{normality} can be written as
\begin{eqnarray}\label{conditional expectation}
&&\mathbb{E}\left[\exp\left(i\int_{-\infty}^\infty uf_n(s)\,dL_s\right)\right]
=\exp\left(\int_{-\infty}^\infty \Psi(uf_n(s))ds\right)\nonumber\\
&=&\exp\left(
-\frac{u^2}{2}
\int_{-\infty}^{\infty}
f_n(s)^2
\left(1+\epsilon(uf_n(s))\right)\,ds
\right)\nonumber\\
&=&
\exp\left(-\frac{u^2}{2}\int_{-\infty}^\infty f^2_n(s)ds\right)
\exp
\left\{
-\frac{u^2}{2}
\int_{-\infty}^{\infty}
f_n(s)^2\epsilon(uf_n(s))\,ds
\right\}
\nonumber\\
&=&\exp\left(-\frac{u^2}{2}\int_{-\infty}^\infty f^2_n(s)ds\right)\times\nonumber\\
&&
\exp
\left\{
-\frac{u^2}{2}
\int_{-\infty}^{\infty}
\left[f_n(s)^2\ind{\vert f_n(s)\vert\ge\eta}\epsilon(uf_n(s))+
f_n(s)^2\ind{\vert f_n(s)\vert<\eta}\epsilon(uf_n(s))\right]\,ds
\right\}\nonumber.
\end{eqnarray}
Now,
$$
\exp\left(-\frac{u^2}{2}\int_{-\infty}^\infty f^2_n(s)ds\right)\to e^{-u^2/2}\times1=e^{-u^2/2}\textrm{ as }n\to\infty.
$$
Also,
$$
\int_{-\infty}^{\infty}
f_n(s)^2\ind{\vert f_n(s)\vert\ge\eta}\epsilon(uf_n(s))ds\le C
\int_{-\infty}^{\infty}
f_n(s)^2\ind{\vert f_n(s)\vert\ge\eta}ds\to0,\qquad\textrm{as }n\to\infty,
$$
and
$$
\int_{-\infty}^{\infty}
f_n(s)^2\ind{\vert f_n(s)\vert\le\eta}\epsilon(uf_n(s))ds\le \left(\int_{-\infty}^\infty f^2_n(s)ds\right)
\underset{\vert v\vert\le\eta}{\sup}\vert\epsilon(uv)\vert\to0,\quad\textrm{as }\eta\to0,
$$
since 
$$
 \left(\int_{-\infty}^\infty f^2_n(s)ds\right)
$$
is bounded.
Then by the bounded convergence theorem, 
$$
\mathbb{E}\left[\exp\left(i\int_{-\infty}^\infty uf_n(s)\,dL_s\right)\right]\to e^{-u^2/2},\qquad\textrm{as }n\to\infty,
$$
the characteristic function of the standard normal distribution.

\end{proof}
\section{Convergence of partial sums of the sampled process}\label{sec:main}
The following theorem gives the asymptotic distribution of the normalized partial sums \eqref{Sndef} of the irregularly observed process $(Y_k)$. The asumptions of \cite{PhilippeRobetViano} to guarantee the $L^2$ stationarity of  $Y_k$ are satisfied ($X_t$ is $L^2$ stationary and $\Delta_i$ are i.i.d.)  It extends Theorem~1 of \cite{ouldhaye2024asymptoticsirregularlyobservedlong} to the continuous-time setting and shows that the limiting behaviour is governed by the same interaction between the renewal sampling scheme and the memory parameter $d$. First note that $S_n$ is not degenerated. Indeed, with $T=(\Delta_1,\ldots,\Delta_n)$, using \eqref{Sndef} and  the stochastic integral isometry, we have
\begin{equation}\label{conditional}
\Var(S_n)
=
\mathbb{E}(S_n^2)
=
\mathbb{E}\left(
\mathbb{E}\left(S_n^2\mid T\right)
\right)
=
\mathbb{E}\left(
\int_{-\infty}^{\infty}
\left(\sum_{i=1}^n a(T_i-s)\right)^2ds
\right),
\end{equation}
and since there exists $t_0>0$ such that for all for all $t\ge t_0$, $a(t)$ is non zero and of  constant sign,
$$
\int_{-\infty}^{\infty}
\left(\sum_{i=1}^n a(T_i-s)\right)^2ds\ge\int_{-\infty}^{-t_0}
\left(\sum_{i=1}^n a(T_i-s)\right)^2ds>0,\textrm{ almost surely}.
$$
\begin{thm}
Assume that the kernel $a$ is given by \eqref{a} with $0<d<1/2$, and that the renewal increments satisfy either {\rm(A1)} or {\rm(A2)}. Then, as $n\to\infty$,
$$
\left(\Var(S_n)\right)^{-1/2}S_n
\overset{d}{\longrightarrow}
\begin{cases}
N,
&
\textrm{if } \mathbb{E}(\Delta_1)<\infty,
\textrm{ or, under {\rm(A2)}, } \alpha=1
\textrm{ or } \alpha\le 1-2d,
\\[0.4cm]
\sqrt{Z(\alpha,d)}N,
&
\textrm{if, under {\rm(A2)}, } 1-2d<\alpha<1,
\end{cases}
$$
where $N$ is a standard normal random variable and the random variable $Z(\alpha,d)$ is defined by
$$
Z(\alpha,d)
=
C_{\alpha,1-2d}
\int_{[0,1]^2}|B_x-B_y|^{2d-1}\,dx\,dy,
$$
where $B_t$ is a L\'evy stable motion with parameter $\alpha$, $B_0=0$, and $B_t$ is nondecreasing in $t$, that is, $B_t$ is a stable subordinator. The process $B$ is independent of $N$, and for $0<r<\alpha$,
\begin{equation}\label{down1}
C_{\alpha,r}
=
(\alpha-r)(2\alpha-r)
\frac{\Gamma(r)}
{2\alpha\Gamma\left(\frac{r}{\alpha}\right)}.
\end{equation}
\end{thm}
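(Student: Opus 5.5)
The approach is to condition on the renewal times $T=(\Delta_i)_{i\ge1}$ and then apply Proposition~\ref{lindeberg1} conditionally. Set
$$
V_n(T)=\int_{-\infty}^{\infty}\Big(\sum_{i=1}^n a(T_i-s)\Big)^2ds ,
$$
which is the conditional variance of $S_n$ given $T$. By \eqref{conditional}, $\Var(S_n)=\mathbb{E}\,V_n(T)$. Since $L$ is independent of $T$, Lemma~\ref{representation} gives
$$
\mathbb{E}\big(e^{iuS_n/\sqrt{\Var(S_n)}}\mid T\big)=\exp\Big(\int\Psi\big(u\,g_n(s)\big)\,ds\Big),
\qquad g_n(s)=\frac{\sum_{i\le n}a(T_i-s)}{\sqrt{\Var(S_n)}} .
$$
The proof then reduces to two facts. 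First, the random normalisation converges in distribution: $V_n(T)/\Var(S_n)\to Z$, with $Z=1$ in the Gaussian regimes and $Z=Z(\alpha,d)$ when $1-2d<\alpha<1$. Second, the Lindeberg-type condition $\int_{|g_n|>\eta}g_n^2\,ds\to0$ holds in probability.

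Given these, I repeat the computation in the proof of Proposition~\ref{lindeberg1} with $f_n=g_n$, keeping the random factor $\int g_n^2=V_n/\Var(S_n)$ unnormalised. This shows that the conditional characteristic function equals $\exp(-\tfrac{u^2}{2}V_n/\Var(S_n))+o_P(1)$. Everything is bounded by $1$, so taking expectations and using bounded convergence along subsequences yields $\mathbb{E}e^{-u^2Z/2}$. This is the characteristic function of $\sqrt Z N$ with $N$ independent of $Z$.

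For the Lindeberg condition, a uniform bound suffices. I show $\sup_s|g_n(s)|\to0$ in probability, since then $\{|g_n|>\eta\}$ is eventually empty. Because $a(t)\sim c(d)(1+t)^{d-1}$ and $a\in L^2$, I split the kernel into a bounded near part and a far tail.
\begin{itemize}
\item Near part: the number of renewal points in a window of length $M$ is stochastically bounded uniformly in position, by the renewal property, so this part contributes $O_P(1)$ up to a $\log n$ factor from a union bound over windows.
\item Tail part: this is bounded by $\sum_i (1+T_i-s)^{d-1}$, which is of order at most $n^{d}$ in the finite-mean case and of order $n^{d/\alpha}$-type powers otherwise.
\end{itemize}
I then compare these bounds with $\Var(S_n)^{1/2}$. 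In case (A1), $\Var(S_n)\asymp n^{1+2d}$. Under (A2), the orders are as in the discrete-time paper: $n^{2-(1-2d)/\alpha}\ell$-type growth for $\alpha>1-2d$, order $n$ (possibly with a log) for $\alpha<1-2d$, and an extra slowly varying factor at the boundary. In each regime the supremum is negligible relative to the normalisation.

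The main work is the normalisation limit. Expanding the square gives
$$
V_n=\sum_{i,j}\gamma(T_j-T_i),\qquad \gamma(h)=\int a(s)\,a(s+|h|)\,ds\sim C|h|^{2d-1}.
$$
The regimes are handled as follows.
\begin{itemize}
\item Case (A1): $T_{\lfloor nx\rfloor}/n\to\mu x$ uniformly almost surely. A Riemann-sum argument, with the diagonal and near-diagonal terms controlled by $\gamma(0)<\infty$ and by $\gamma\in$ locally bounded, gives $V_n/n^{1+2d}\to C\mu^{2d-1}\int\!\!\int|x-y|^{2d-1}$ almost surely. The $L^1$ convergence needed to identify $\Var(S_n)$ follows from uniform integrability, which I get from $\mathbb{E}\sum_{i\neq j}(T_j-T_i)^{2d-1}$ bounds via Jensen-type renewal estimates; this is where the new argument is needed.
\item Case $1-2d<\alpha<1$: I use the functional limit $T_{\lfloor nx\rfloor}/b_n\Rightarrow B_x$ with $b_n$ regularly varying of index $1/\alpha$. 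Continuity of $\int\!\!\int|B_x-B_y|^{2d-1}$ on the relevant set is available because $2d-1>-\alpha$ makes the occupation integrals finite, and convergence of the means produces the constant $C_{\alpha,1-2d}$ from \eqref{down1} as $1/\mathbb{E}\int\!\!\int|B_x-B_y|^{2d-1}$. This part is adapted directly from the discrete-time proof.
\item Case $\alpha<1-2d$: the diagonal and short-range terms dominate, and $V_n/\mathbb{E}V_n\to1$ by a law of large numbers for the stationary sequence $\sum_j\gamma(T_{i+j}-T_i)$.
\item Case $\alpha=1$: a weak-law argument gives the degenerate limit.
\end{itemize}

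I expect the boundary case $\alpha=1-2d$ to be the main obstacle. There both contributions are of the same order $n\,L(n)$ with a slowly varying $L$ that diverges, and one must show a law of large numbers for $V_n/\mathbb{E}V_n$. I would first try a second-moment computation: show $\Var(V_n)=o((\mathbb{E}V_n)^2)$ by exploiting independence of the renewal increments across disjoint blocks, truncating $\Delta_i$ at level $n$.
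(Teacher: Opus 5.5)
Your conditioning architecture is sound, and in one respect cleaner than the paper's. Keeping the random factor $\int g_n^2=V_n(T)/\Var(S_n)$ inside the conditional characteristic function yields $\mathbb{E}e^{-u^2Z/2}$ directly, so you avoid the separate asymptotic-independence step the paper borrows from the discrete-time case.

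The genuine gap is your verification of the Lindeberg condition through $\sup_s|g_n(s)|\to0$. The hypotheses give only $\int_0^\infty a^2<\infty$ and the behaviour \eqref{a} at infinity, so $a$ may be unbounded on compacts (e.g.\ $a(t)=t^{-1/4}$ near $0$); your ``bounded near part'' is an extra assumption. For such a kernel $\sum_{i\le n}a(T_i-s)\to\infty$ as $s\uparrow T_i$. Hence $\sup_s|g_n(s)|=+\infty$ for every $n$, and $\{|g_n|>\eta\}$ is never empty. The Lindeberg integral still tends to zero, but proving it needs an integral bound on the large values of $a$. This is exactly what the paper's truncation $h=h^{(B)}+h^{(R)}$ provides: the bounded piece is handled via $\int_{f\ge\eta}f^2\le\eta^{-1}\int f^3$, and the piece $h\ind{h>B}$, supported in a fixed $[0,A]$, is controlled in $L^2$ using $\int_0^A h^2(1+u)\ind{h(1+u)\ge B}\,du\to0$ as $B\to\infty$, uniformly in $n$ in probability. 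You can repair your argument the same way. Keep the sup-norm bound for $a\ind{|a|\le B}$. With $a^{(R)}=a\ind{|a|>B}$ supported in $[0,A]$, Cauchy--Schwarz gives $\int\bigl(\sum_{i\le n}a^{(R)}(T_i-s)\bigr)^2ds\le\|a^{(R)}\|_2^2\,\#\{(i,j):|T_i-T_j|\le A\}$. The count has mean at most $n$ times a constant (the renewal function at $A$), and by your own growth estimates $\Var(S_n)$ is at least of order $n$, so this part is negligible after $n\to\infty$ and then $B\to\infty$. The inequality $\int_{|f+g|>\eta}(f+g)^2\le 4\int_{|f|>\eta/2}f^2+4\int g^2$ then finishes.

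Two further steps are not justified as written.
\begin{itemize}
\item In case (A1), boundedness of $\gamma$ does not control the band $|i-j|\le\delta n$. That band contains about $2\delta n^2$ pairs, so it only gives $O(\delta n^2)$, far larger than the normalisation $n^{1+2d}$. You need $|\gamma(h)|\le C(1+|h|)^{2d-1}$ together with the negative-moment bound $\mathbb{E}(1+T_k)^{-(1-2d)}\le Ck^{-(1-2d)}$, which bounds the band's mean contribution by $C\delta^{2d}n^{1+2d}$. This is what the paper takes from \cite{SHI20101260}. It cannot come from Jensen: $x\mapsto(1+x)^{-r}$ is convex, so Jensen gives the reverse inequality. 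A Hoeffding bound on $\sum_{i\le k}\min(\Delta_i,1)$ does work. The resulting convergence is in probability, not almost surely, which is all you need.
\item In the boundary case $\alpha=1-2d$ with $\sum_k b_k^{-\alpha}=\infty$, the new obstacle is small increments, not large ones. In continuous time $T_{k+\ell}-T_k\ge\ell$ is lost. The paper restores it with high probability by counting increments exceeding a level $a$ with $P(\Delta_1>a)\ge 3/4$ and applying a Chernoff bound. Your plan of truncating $\Delta_i$ at level $n$ does not address this.
\end{itemize}
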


\begin{proof} From \eqref{Sndef}, we can write
\begin{eqnarray*}
\left(\Var(S_n)\right)^{-1/2}S_n
=
R_n
\int_{-\infty}^\infty f_n(s)\,dL_s,
\end{eqnarray*}
where
\begin{equation}\label{normalized f_n}
f_n(s)
=
\left(
\int_{-\infty}^\infty
\left(
\frac{1}{n}\sum_{i=1}^n a(T_i-s)
\right)^2ds
\right)^{-1/2}
\left(
\frac{1}{n}\sum_{i=1}^n a(T_i-s)
\right).
\end{equation}
and
$$
R_n
=
\left(
\int_{-\infty}^\infty
\left(\sum_{i=1}^n a(T_i-s)\right)^2ds
\right)^{1/2}
\left(\Var(S_n)\right)^{-1/2}.
$$
The proof consists of showing 
\begin{eqnarray}\label{ratio}
R_n^2\overset{d}{\to}
\begin{cases}
1,
&
\textrm{if } \mathbb{E}(\Delta_1)<\infty,
\textrm{ or, under {\rm(A2)}, } \alpha=1
\textrm{ or } \alpha\le 1-2d,
\\[0.4cm]
Z(\alpha,d),
&
\textrm{ under {\rm(A2)} with } 1-2d<\alpha<1,
\end{cases}
\end{eqnarray}
and
\begin{equation}\label{self normalized}
\int_{-\infty}^\infty f_n(s)\,dL_s
\overset{d}{\longrightarrow}
\mathcal{N}(0,1),
\end{equation}
as well as  the asymptotic independence of 
$R_n$ and $\int_{-\infty}^\infty f_n(s)dL_s$  when $0<1-2d<\alpha<1$.\\
{\bf Proof of \eqref{ratio}.}
Using \eqref{a}, for $u\ge0$,  we can write
$$
a(u)=h(1+u)(1+u)^{d-1},
$$
where $h(1+u)\to c(d)$ given by \eqref{a}, as $u\to\infty$.
\begin{eqnarray}\label{equivalence}
\lefteqn{
\int_{-\infty}^\infty
\left(
\frac{1}{n}\sum_{i=1}^n a(T_i-s)
\right)^2ds
}\nonumber\\
&&=
\int_{-\infty}^\infty
\left(
\int_0^1
a(1+T_{[nx]}-s)\,dx
\right)^2ds
\nonumber\\
&&=
2\int_0^1\int_0^x\int_{-\infty}^\infty
h(1+T_{[nx]}-s)h(1+T_{[ny]}-s)(1+T_{[nx]}-s)^{d-1}
(1+T_{[ny]}-s)^{d-1}
\ind{s\le T_{[ny]}}\,ds\,dy\,dx
\nonumber\\
&&=
2\int_0^1\int_0^x\int_0^\infty h(1+T_{[nx]}-T_{[ny]}+u)h(1+u)
(1+T_{[nx]}-T_{[ny]}+u)^{d-1}
(1+u)^{d-1}\,du\,dy\,dx
\nonumber\\
&&=
\int_{[0,1]^2}(1+|T_{[nx]}-T_{[ny]}|)^{2d-1}\Bigg[
\int_0^\infty
(1+u)^{d-1}
\left(
\frac{1}{1+|T_{[nx]}-T_{[ny]}|}+u
\right)^{d-1}\times\nonumber\\
&&h\left((1+|T_{[nx]}-T_{[ny]}|)(1+u)\right)h\left(1+(1+|T_{[nx]}-T_{[ny]}|)u\right)
du
\Bigg]dx\,dy\nonumber\\
&&=\int_{[0,1]^2}g(1+|T_{[nx]}-T_{[ny]}|)(1+|T_{[nx]}-T_{[ny]}|)^{2d-1}dx\,dy,
\end{eqnarray}
where for all $v\ge1$, 
\begin{eqnarray}\label{finite d}
g(v)&=&\int_0^\infty h(1+vu)h(v(1+u))\left(\frac{1}{v}+u\right)^{d-1}(1+u)^{d-1}du.
\end{eqnarray}
From \eqref{equivalence} and \eqref{conditional} with Fubini, we can write
$$
R_n^2
=
\frac{
\displaystyle\int_{[0,1]^2}
g(1+|T_{[nx]}-T_{[ny]}|)(1+|T_{[nx]}-T_{[ny]}|)^{2d-1}\,dx\,dy
}{
\displaystyle\int_{[0,1]^2}
\mathbb{E}
\left[g(1+|T_{[nx]}-T_{[ny]}|)
(1+|T_{[nx]}-T_{[ny]}|)^{2d-1}
\right]dx\,dy
}.
$$
This representation of $R_n^2$ allows us to adapt the proof established in
\cite{ouldhaye2024asymptoticsirregularlyobservedlong} for the discrete-time
process $X_t$. As in the discrete-time setting, the analysis splits
into five cases according to the interplay between the renewal process and the memory parameter $d$.
A careful examination of the proof of Theorem~1 in
\cite{ouldhaye2024asymptoticsirregularlyobservedlong} shows that the arguments
carry over unchanged in three of these cases. Only the cases
$\mathbb{E}(\Delta_1)<\infty$ and $\alpha=1-2d$ with
$b_n^{-\alpha}$ not summable require new arguments. The sequence $b_n$  is regularly varying at infinity  with index $1/\alpha$ and satisfies
$$
\frac{T_{[nx]}}{b_n}
\overset{D[0,1]}{\longrightarrow}
B_x,
$$
\cite[see for example][Ch. 7, Corollary 7.1 and page 247] {resnick2007heavy}. The completion of the  proof of \eqref{ratio}, covering these two  cases, is given in the Subsection \ref{technical}. \\
\medskip
{\boldmath\textbf{Proof of \eqref{self normalized}}}
With $f_n$ in \eqref{normalized f_n}, writing 
$$
\mathbb{E}\left[\exp\left(iu\int_{-\infty}^\infty f_n(s)dL_s\right)\right]=\mathbb{E}\left(\mathbb{E}\left[\exp\left(iu\int_{-\infty}^\infty f_n(s)dL_s\right)\Bigg\vert T\right]\right),
$$
and using Proposition \ref{lindeberg1}, \eqref{self normalized} follows from the following Lemma.
\begin{lem}\label{in proba con}
Let $\eta>0$, and  let $f_n$ be as defined in \eqref{normalized f_n}. Then
    \begin{equation}\label{unif fn}
\int_{\vert f_n(s)\vert>\eta}\vert f_n(s)\vert
\overset{P}{\to}0,\qquad\textrm{as }n\to\infty.
\end{equation}
\end{lem}
Finally, the proof of the asymptotic independence of
\[
R_n \quad \text{and} \quad \int_{-\infty}^{\infty} f_n(s)\,dL_s
\]
when \(0<1-2d<\alpha<1\) can be carried out in essentially the same way as in the discrete-time case considered in \cite{ouldhaye2024asymptoticsirregularlyobservedlong}, as it relies solely on a conditional version of \eqref{self normalized}, namely that for every \(x\in\mathbb{R}\),
\[
P\left(\int_{-\infty}^{\infty} f_n(s)\,dL_s \le x \,\Big|\, T\right)
\overset{P}{\longrightarrow}
P(N\le x),
\]
which can be established  in the same way as in the proof of Proposition \ref{lindeberg1}.
\end{proof}
\section{Technical proofs}\label{parts}
\subsection{Completion of the proof of \eqref{ratio}.}\label{technical}
We start with the following Lemma, which will be proven in the subsection \ref{proof of Tech lemma}.
\begin{lem}\label{function g}
The function $g$ in \eqref{finite d} is well defined, bounded, and as $v\to\infty$, 
\begin{equation}\label{delta d}
g(v)\to c^2(d)\int_0^\infty u^{d-1}(1+u)^{d-1}du=c^2(d)\frac{\Gamma(d)\Gamma(1-2d)}{\Gamma(1-d)}:=\delta(d).
\end{equation}
\end{lem}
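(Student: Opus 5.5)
The claim concerns
$$g(v)=\int_0^\infty h(1+vu)\,h(v(1+u))\left(\frac1v+u\right)^{d-1}(1+u)^{d-1}\,du,\qquad v\ge1,$$
and the plan is to use dominated convergence in $u$, with a dominating function that does not depend on $v\ge1$.

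First, the properties of $h$. For $w\ge1$ it is defined by $h(w)=a(w-1)w^{1-d}$. By \eqref{a}, $h(w)\to c(d)$ as $w\to\infty$, so $h$ is bounded on $[w_0,\infty)$ for some $w_0$. On the compact range $[1,w_0]$ we only know $a\in L^2$, so $a$, and hence $h$, need not be bounded there. I would therefore assume, as is implicit in the paper, that $a$ is locally bounded, for instance continuous on $[0,\infty)$; then $\sup_{w\ge1}|h(w)|=:M<\infty$. If one insists on only $a\in L^2$, the bounded-$h$ argument must be replaced by a Cauchy--Schwarz bound. In that case I would go back to the form $\int_0^\infty a(v-1+u)a(u)\,du\le\|a\|_2^2$, which shows that $g(v)v^{2d-1}$ is bounded, but this alone does not give boundedness of $g$. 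So I would state local boundedness of $a$ explicitly as a standing assumption.

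Next, the integrable majorant. Note that $h(v(1+u))$ has argument $\ge v\ge1$ and $h(1+vu)$ has argument $\ge1$, so both factors are bounded by $M$. Since $d-1<0$ and $1/v\ge0$, we have $(1/v+u)^{d-1}\le u^{d-1}$. Hence the integrand is bounded in absolute value by
$$M^2u^{d-1}(1+u)^{d-1},$$
uniformly in $v\ge1$. This function is integrable on $(0,\infty)$: near $0$ it behaves like $u^{d-1}$ with $d>0$, and at infinity like $u^{2d-2}$ with $2d-2<-1$ because $d<1/2$. This shows at once that $g$ is well defined and that
$$|g(v)|\le M^2\int_0^\infty u^{d-1}(1+u)^{d-1}\,du<\infty .$$

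For the limit, fix $u>0$. As $v\to\infty$, both $1+vu\to\infty$ and $v(1+u)\to\infty$, so each $h$ factor tends to $c(d)$, and $(1/v+u)^{d-1}\to u^{d-1}$. The integrand therefore converges pointwise on $(0,\infty)$ to $c^2(d)u^{d-1}(1+u)^{d-1}$, and dominated convergence gives
$$g(v)\to c^2(d)\int_0^\infty u^{d-1}(1+u)^{d-1}\,du .$$

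Finally, the closed form. The integral $\int_0^\infty u^{d-1}(1+u)^{d-1}\,du$ is the Beta integral of the second kind,
$$\int_0^\infty \frac{u^{p-1}}{(1+u)^{p+q}}\,du=B(p,q),$$
with $p=d$ and $p+q=1-d$, that is $q=1-2d$. It therefore equals
$$B(d,1-2d)=\frac{\Gamma(d)\Gamma(1-2d)}{\Gamma(1-d)} .$$

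The only delicate point is the boundedness of $h$ near $w=1$, which needs the local-boundedness assumption on $a$ discussed above. Everything else is routine.
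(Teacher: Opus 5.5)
Your dominated-convergence argument is correct, but only under the extra hypothesis that $h$ is bounded on all of $[1,\infty)$, and that hypothesis is not available. The standing assumptions on the kernel are only $\int_0^\infty a^2(t)\,dt<\infty$ and $a(t)\sim c(d)(1+t)^{d-1}$. These allow $a$, and hence $h(w)=a(w-1)w^{1-d}$, to be unbounded on a compact set (e.g.\ $a(t)=t^{-1/4}$ near $0$). Local boundedness is also not implicit in the paper: the truncation $h=h^{(B)}+h^{(R)}$ in the proof of Lemma~\ref{in proba con} is there precisely because $h$ may blow up near $w=1$. In that generality your single majorant $M^2u^{d-1}(1+u)^{d-1}$ does not exist, since $h(1+vu)$ is unbounded for small $u$. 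As written, you prove the lemma only for a smaller class of kernels.

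You also dismissed Cauchy--Schwarz too quickly. Undoing $s=vu$ gives $g(v)=v^{1-2d}\int_0^\infty a(s)\,a(v-1+s)\,ds$, so $|g(v)|\le v_0^{1-2d}\int_0^\infty a^2(t)\,dt$ for $1\le v\le v_0$. This settles well-definedness for every $v$ and boundedness on any bounded range of $v$; only large $v$ needs more.

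For large $v$, the missing idea is the split the paper uses. Pick $A$ with $|h(w)|\le C$ for $w\ge 1+A$. For $v\ge 1+A$ the factor $h(v(1+u))$ is bounded by $C$ for every $u\ge0$. On $\{u\le A/v\}$, use $(1/v+u)^{d-1}\le v^{1-d}$ and $(1+u)^{d-1}\le 1$:
\[
\int_0^{A/v}\bigl|h(1+vu)\,h(v(1+u))\bigr|\Bigl(\tfrac1v+u\Bigr)^{d-1}(1+u)^{d-1}\,du\le C\,v^{1-d}\int_0^{A/v}|h(1+vu)|\,du=C\,v^{-d}\int_0^{A}|h(1+t)|\,dt .
\]
The right-hand side is finite, because $h(1+\cdot)$ is square integrable on $[0,A]$ and hence integrable there. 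It also tends to $0$: the bad interval has length only $A/v$, which more than compensates the factor $v^{1-d}$.

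On $\{u>A/v\}$ both $h$-factors are bounded by $C$. So your majorant $C^2u^{d-1}(1+u)^{d-1}$, your pointwise limit (for fixed $u>0$ the indicator is eventually $1$) and your Beta-function evaluation go through verbatim. This gives boundedness on $[1+A,\infty)$ and the limit $\delta(d)$ with no extra assumption on $a$.
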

Let us now go back to the end of the proof of \eqref{ratio}. \\
\noindent{\boldmath\textbf{Case of $\mathbb{E}(\Delta_1)<\infty$.}} The  proof will consist in showing that, properly normalized, the numerator of $R_n^2$ is $P\times\lambda^2$ uniformly integrable, where $\lambda$ is the Lebesgue probability measure on the unit interval.  \\
From \cite{SHI20101260}, for $0<r<1$,
$$
\mathbb{E}\left[(1+T_k)^{-r}\right]
\le
ck^{-r},
$$
and hence
$$
\mathbb{E}
\left[
\left(
\frac{1+|T_{[nx]}-T_{[ny]}|}
{1+|[nx]-[ny]|}
\right)^{-r}
\right]
$$
is uniformly bounded, say by $M$, in $n\ge1$ and $(x,y)\in[0,1]^2$. Using the fact that $1+|[nx]-[ny]|\ge n|x-y|$, we have
\begin{eqnarray*}
\lefteqn{
\int_{[0,1]^2}
\mathbb{E}
\left[
\left(
\frac{1+|T_{[nx]}-T_{[ny]}|}
{n}
\right)^{-r}
\right]dx\,dy
}\\
&&=
\int_{[0,1]^2}
\mathbb{E}
\left[
\left(
\frac{1+|T_{[nx]}-T_{[ny]}|}
{1+|[nx]-[ny]|}
\right)^{-r}
\right]
\left(
\frac{1+|[nx]-[ny]|}{n}
\right)^{-r}
dx\,dy\\
&&\le
M
\int_{[0,1]^2}
|x-y|^{-r}\,dx\,dy
<
\infty.
\end{eqnarray*}
Therefore
$$
g(1+|T_{[nx]}-T_{[ny]}|)\left(
\frac{1+|T_{[nx]}-T_{[ny]}|}
{n}
\right)^{2d-1}
$$
is $P\times\lambda^2$ uniformly integrable. Hence, 
$$
\int_{[0,1]^2}
\mathbb{E}
\left[
g(1+|T_{[nx]}-T_{[ny]}|)\left(
\frac{1+|T_{[nx]}-T_{[ny]}|}
{n}
\right)^{2d-1}
\right]
dx\,dy
\to
\delta(d)\frac{\left(\mathbb{E}(\Delta_1)\right)^{2d-1}}
{d(2d+1)}.
$$
Moreover, the uniform integrability above also allows to apply  \cite{CREMERS1986305} to obtain that,  as $n\to\infty$,
\begin{eqnarray}\label{finite moment}
\lefteqn{
\int_{[0,1]^2}
g(1+|T_{[nx]}-T_{[ny]}|)\left(
\frac{1+|T_{[nx]}-T_{[ny]}|}
{n}
\right)^{2d-1}
dx\,dy
}\nonumber\\
&&\overset{P}{\to}
\delta(d)\left(\mathbb{E}(\Delta_1)\right)^{2d-1}
\int_{[0,1]^2}|x-y|^{2d-1}\,dx\,dy\nonumber\\
&&
=\delta(d)
\frac{\left(\mathbb{E}(\Delta_1)\right)^{2d-1}}
{d(2d+1)}
<
\infty.
\end{eqnarray}
Thus \eqref{ratio} holds.\\
\medskip
\noindent{\boldmath\textbf{Case of $\alpha=1-2d$ and $b_n^{-\alpha}$  not summable.}}
Let
\begin{equation}\label{V_n}
V_n
=
\sum_{k=1}^n b_k^{-\alpha}
\to\infty,
\qquad n\to\infty.
\end{equation}
Then we prove that
\begin{equation}\label{non summable b_n}
\frac{n}{V_n}
\int_{[0,1]^2}
g(1+|T_{[nx]}-T_{[ny]}|)\left(1+|T_{[nx]}-T_{[ny]}|\right)^{-\alpha}
dx\,dy
\overset{L^2}{\longrightarrow}
C,
\end{equation}
for some constant $C>0$ and  \eqref{ratio} then holds.
As in discrete-time case, the tool for establishing \eqref{non summable b_n} is to show that, as $n\to\infty$,
\begin{equation}\label{unif variance}
\sup_{k\le n}\Var(Z'_{n,k})
\to0,
\end{equation}
where
$$
Z'_{n,k}
=
\frac{1}{V_n}
\sum_{\ell=1}^{n-k}
g(1+T_{k+\ell}-T_k)\left(1+T_{k+\ell}-T_k\right)^{-\alpha}.
$$
The difficulty in establishing \eqref{unif variance} stems from the fact that, in continuous time, the renewal
increments satisfy only $\Delta_i>0$, whereas the discrete-time setting
automatically imposes $\Delta_i\ge1$. Consequently, 
we decompose $Z'_{n,k}$ as follows
\begin{eqnarray}\label{decomp of Z}\lefteqn{
Z'_{n,k}=
\frac{1}{V_n}
\sum_{\ell=1}^{n-k}
g(1+T_{k+\ell}-T_k)\left(1+T_{k+\ell}-T_k\right)^{-\alpha}
\mathbf 1_{\{N_{k,\ell}^{(n)}\ge\ell/2\}}}\nonumber\\
&&+
\frac{1}{V_n}
\sum_{\ell=1}^{n-k}
g(1+T_{k+\ell}-T_k)\left(1+T_{k+\ell}-T_k\right)^{-\alpha}
\mathbf 1_{\{N_{k,\ell}^{(n)}<\ell/2\}},
\end{eqnarray}
where
$$
N_{k,\ell}^{(n)}=\sum_{i=k+1}^{k+\ell}\ind{\Delta_i>a},
$$
for certain $a>0$ such that $P(\Delta_i>a)\ge3/4.$ This is possible since $P(\Delta_i>0)=1$ by assumption.
For the first term of $Z'_{n,k}$ in \eqref{decomp of Z}, we have
\begin{eqnarray*}\lefteqn{
\textrm{Var}\left(\frac{1}{V_n}
\sum_{\ell=1}^{n-k}
g(1+T_{k+\ell}-T_k)\left(1+T_{k+\ell}-T_k\right)^{-\alpha}
\mathbf 1_{\{N_{k,\ell}^{(n)}\ge\ell/2\}}\right)}\\
&&
=(V_n)^{-2}\int_0^{1-\frac{k}{n}}\int_0^{1-\frac{k}{n}}\\
&&\Bigg[\mathbb{E}\left(g(1+T_{[ns]})(1+T_{[ns]})^{-\alpha}\ind{T_{[ns]\ge[ns]/2}}g(1+T_{[nt]})(1+T_{[nt]})^{-\alpha}\ind{T_{[nt]\ge[nt]/2}}\right)\\
&&-\mathbb{E}\left(g(1+T_{[ns]})(1+T_{[ns]})^{-\alpha}\ind{T_{[ns]\ge[ns]/2}}\right)\mathbb{E}\left((1+T_{[nt]})(1+T_{[nt]})^{-\alpha}\ind{T_{[nt]\ge[nt]/2}}\right)\Bigg]dsdt
\end{eqnarray*}
Thanks to the indicator function, the same reasoning as in \cite{ouldhaye2024asymptoticsirregularlyobservedlong} works
to show that, the quantity above converges to zero, uniformly in $k\le n$.  The second term of $Z'_{n,k}$ in \eqref{decomp of Z} is bounded by
\begin{eqnarray*}
\lefteqn{
\frac{1}{V^2_n}
\mathbb{E}
\left(
\sum_{\ell=1}^{n-k}
\left(1+T_{k+\ell}-T_k\right)^{-\alpha}
\mathbf 1_{\{N_{k,\ell}^{(n)}<\ell/2\}}
\right)^2
}\\
&&\le
2\frac{(\log(V_n))^2}{V^2_n}
+
2\frac{1}{V^2_n}
\mathbb{E}
\left[
\left(
\sum_{\ell=\log(V_n)}^{n-k}
\mathbf 1_{\{N_{k,\ell}^{(n)}<\ell/2\}}
\right)^2
\right].
\end{eqnarray*}
Using the Chernoff bound (see, for example, \cite{DubhashiPanconesi2009}, Theorem~1.1), namely that, if $X\sim\mathrm{Binomial}(n,p)$, then
\[
P(X\le (1-\delta)np)\le \exp\left(-\frac{\delta^2np}{2}\right),\qquad 0\le\delta\le1,
\]
and noting that $N_{k,\ell}^{(n)}$ is binomially distributed with parameters $\ell$ and $p:=P(\Delta_1>a)\ge3/4$, we choose $\delta=1-\frac{1}{2p}$ to obtain
\[
P\left(N_{k,\ell}^{(n)}\le \frac{\ell}{2}\right)
\le
\exp\left(-\frac12\left(1-\frac{1}{2p}\right)^2\ell p\right)
\le
\exp\left(-\frac{\ell}{24}\right).
\]
As $n\to\infty$,
\begin{eqnarray*}
\lefteqn{
\frac{1}{V_n}
\mathbb{E}
\left[
\left(
\sum_{\ell=\log(V_n)}^{n-k}
\mathbf 1_{\{N_{k,\ell}^{(n)}<\ell/2\}}
\right)^2
\right]
}\\
&&\le
\frac{1}{V^2_n}
\left(
\sum_{\ell=\log(V_n)}^{n}
P\left(N_{k,\ell}^{(n)}<\ell/2\right)
+
2\sum_{\ell=\log(V_n)}^{n}
\sum_{\ell'=\ell}^{n}
P\left(N_{k,\ell'}^{(n)}<\ell'/2\right)
\right)\\
&&=
o\left(
\frac{1}{V^2_n}
\right)
\to0
\end{eqnarray*}
and hence the second term of \eqref{decomp of Z} is bounded by $(\log V_n)^2/V_n^2\to0$ as $n\to\infty$ since $V_n\to\infty.$
This completes the proof of \eqref{unif variance}.
\subsection{Proofs of the Lemmas}
{\bf Proof of Lemma \ref{representation}.}
\begin{proof}
Note that this  Lemma  was proved for left-continuous functions over closed intervals in \cite{ContTankov2003} (Lemma 15.1).
First let assume $g$ simple function taking  values $c_j$ on intevals $(a_j,b_j]$, $j=1,\ldots,m,$ $m\ge1$ then as in the proof of the Lemma 15.1 of \cite{ContTankov2003}, using the independence of the increments of $L_s$, we obtain that
\begin{eqnarray*}
\mathbb{E}\left[\exp\left(i\int_{-\infty}^\infty g(s)\,dL_s\right)\right]
&=&\prod_{i=1}^m\mathbb{E}\left(e^{ic_j(L_{b_j-a_j})}\right)=\exp\left(\sum_{j=1}^m(b_j-a_j)\Psi(c_j)\right)\\
&=&\exp\left(\int_{-\infty}^\infty \Psi(g(s))ds\right).
\end{eqnarray*}
If $g$ is in $L^2(\mathbb{R})$ then it can be approximated by a left-continuous simple function $g_n$. Actually, from Theorem 2.41 of \cite{Folland1999}, the class of simple functions taking values $c_j$ on intervals $(a_j,b_j]$ is dense in $L^2$ (the Theorem is stated for $L^1(\mathbb{R})$  but the proof is the same for $L^2(\mathbb{R})$), so there exists a sequence  of simple left continuous functions $g_n$
 such that
 $$
\int_{-\infty}^\infty (g_n(s)-g(s))^2ds\to0,\qquad\textrm{as }n\to\infty.
$$
Writing $\Psi(g_n(s))=g^2_n(s)h(g_n(s))$ and $\Psi(g(s))=g^2(s)h(g(s))$, where $h(x)=\Psi(x)/x^2$, so that $h$ is bounded, we have $\Psi(g_n)\to\Psi(g)$ in $L^1(\mathbb{R})$, as $\vert\Psi(x)\vert\le Cx^2$ so that $\Psi(g_n(s))$ is uniformly integrable and $\Psi(g(s))$ is integrable. In particular, we get
\begin{equation}\label{two levels}
\int_{-\infty}^\infty \Psi(g_n(s))ds\to\int_{-\infty}^\infty\Psi(g(s))ds,\qquad\textrm{as }n\to\infty.
\end{equation}
\begin{eqnarray*}\lefteqn{
\mathbb{E}\left[\exp\left(i\int_{-\infty}^\infty g_n(s)\,dL_s\right)-\exp\left(i\int_{-\infty}^\infty g(s)\,dL_s\right)\right]}\\
&&=\mathbb{E}\left[\exp\left(i\int_{-\infty}^\infty g(s)\,dL_s\right)\left\{\exp\left(i\int_{-\infty}^\infty \left(g_n(s)-g(s)\right)\,dL_s\right)-1\right\}\right]\to0\textrm{ as }n\to\infty.
\end{eqnarray*}
Hence we obtain that
$$
\exp\left(\int_{-\infty}^\infty \Psi(g_n(s))ds\right)\to\mathbb{E}\left[\exp\left(i\int_{-\infty}^\infty g(s)\,dL_s\right)\right],
$$
which, combined with \eqref{two levels}, gives
$$
\mathbb{E}\left[\exp\left(i\int_{-\infty}^\infty g(s)\,dL_s\right)\right]=\exp\left(\int_{-\infty}^\infty\Psi(g(s))ds\right).
$$
\end{proof}\noindent
{\bf Proof of Lemma \ref{in proba con}.}
\begin{proof}
It follows the usual truncation argument. We decompose the kernel into a bounded part and a remainder and estimate the corresponding contributions separately.
Let $A,B$ be positive such that $\vert h(u)\vert<B$ for $u\ge A.$ This is possible since $h(u)\to c(d)$ as $u\to\infty.$
With the convention that $h(u)=0$ for $u<0$, write
$$
h(u)=h(u)\ind{h(u)\le B}+h(u)\ind{h(u)>B}:=h^{(B)}(u)+h^{(R)}(u).
$$
Let
$$
U^2_n=\int_{-\infty}^\infty
\left(
\frac{1}{n}\sum_{i=1}^n a(T_i-s)
\right)^2ds.
$$
According to \eqref{equivalence}, as $n\to\infty$, $U_n^2$ is the numerator of
$R_n^2$. As established in the proof of \eqref{ratio}, 
\begin{equation*}
U_n/d_n\overset{d}{\longrightarrow}C,
\end{equation*}
where $C$ is either a positive random variable or a positive constant,
depending on the interplay between the renewal process and the memory
parameter $d$, and $d_n$ is given by
\begin{equation}\label{five guys}
d_n=\begin{cases}
n^{d-1/2},&\textrm{ if }\mathbb{E}(\Delta_1)<\infty,\\
b_n^{d-1/2},&\textrm{ if }2d-1<\alpha<1,\\
h_n^{d-1/2},&\textrm{ if }\alpha=1,\\
n^{1/2},&\textrm{ if }\alpha\le1-2d\textrm{ and }b_n^{-\alpha}\textrm{ is summable},\\
(n/V_n)^{1/2},&\textrm{ if }\alpha=1-2d,\textrm{ and }b_n^{-\alpha}\textrm{ is not summable},
\end{cases}
\end{equation}
where $h_n$ is regularly varying with index $1/\alpha$, and $V_n$ is defined in \eqref{V_n}.
Without loss of generality, assume $h^{(B)}$ and $h^{(R)}$ are nonnegative. 
\begin{eqnarray*}\lefteqn{
f_n(s)=U_n^{-1}\int_0^1 h^{(B)}(1+(T_{[nx]}-s)(1+T_{[nx]}-s)^{d-1}ds}\\
&&+U_n^{-1}\int_0^1 h^{(R)}(1+T_{[nx]}-s)(1+T_{[nx]}-s)^{d-1}ds.
\end{eqnarray*}
Using the basic facts that $(a+b)^2\le 2(a^2+b^2)$ and that for $f,g\ge0$
$$
\int_{f+g\ge\eta}(f(s)+g(s))ds\le2\int_{f\ge\eta/2}f(s)ds+2\int_{g\ge\eta/2}g(s)ds, 
$$
and that
$$
\int_{f\ge\eta}f^2(s)ds\le\eta^{-1}\int_{-\infty}^\infty f^3(s)ds,
$$
we can write
\begin{eqnarray*}\lefteqn{
\int_{\vert f_n(s)\vert>\eta}f^2_n(s)ds}\\
&&\le 8\eta^{-1}U_n^{-3}\int_{-\infty}^\infty\left(
\int_0^1 h^{(B)}(1+T_{[nx]}-s)(1+T_{[nx]}-s)^{d-1}dx\right)^3ds\\
&&+4U_n^{-2}\int_{-\infty}^\infty\left(
\int_0^1 h^{(R)}(1+T_{[nx]}-s)(1+T_{[nx]}-s)^{d-1}dx\right)^2ds\\&&
:=I^{(B)}(n)+I^{(R)}(n).
\end{eqnarray*}
With  variable changes $u=T_{[nz]}-s$ and then  $u=(1+T_{[nx]}-T_{[nz]})v$ in the quantities below, we can write
\begin{eqnarray*}\lefteqn{
\mathbb{E}\left(\int_{-\infty}^\infty\left(
\int_0^1 h^{(B)}(1+T_{[nx]}-s)(1+T_{[nx]}-s)^{d-1}dx\right)^3ds\right)}\\
&&=6\int_{0<z<y<x<1}\mathbb{E}\Bigg(\int_0^\infty
h^{(B)}(1+T_{[nx]}-T_{[nz]}+u)h^{(B)}(1+T_{[ny]}-T_{[nz]}+u)h^{(B)}(1+u)\\
&&(1+T_{[nx]}-T_{[nz]}+u)^{d-1}(1+T_{[ny]}-T_{[nz]}+u)^{d-1}(1+u)^{d-1}du
\Bigg)dzdydx\\
&&
\le 6B^3\int_{0<z<y<x<1}\Bigg(\int_0^\infty(1+v)^{d-1}v^{d-1}dv\Bigg)\\
&&\mathbb{E}\left[(1+T_{[nx]}-T_{[nz]})^{2d-1}(1+T_{[ny]}-T_{[nz]})^{d-1}\right]dzdydx\\
&&\le 6B^3\left(\int_0^\infty(1+v)^{d-1}v^{d-1}dv\right)\times\\
&&\int_0^1\int_0^x\int_0^y\mathbb{E}\left((1+T_{[nx]}-T_{[ny]})^{2d-1}\right)\mathbb{E}\left((1+T_{[ny]}-T_{[nz]})^{d-1}\right)dzdydx.
\end{eqnarray*}
Using the fact that for all $0<r<1$, $\mathbb{E}(1+T_k)^{-r}\le Cb_k^{-r}$ (see Lemma 1 (i) of \cite{ouldhaye2024asymptoticsirregularlyobservedlong} when $\Delta_1$ has heavy tail ($A_2$) and \cite{SHI20101260} when $\mathbb{E}(\Delta_1)<\infty$, $(A_1$), with some small $\delta>0$, we can bound the last integral by 
\begin{eqnarray*}\lefteqn{
    n^{\delta-3}C\sum_{1\le k<j<i\le n}(i-j)^{\frac{2d-1}{\alpha}}(j-k)^{\frac{d-1}{\alpha}}}\\
    &&\le Cn^{\delta-2}\left(\sum_{h=1}^nh^{\frac{2d-1}{\alpha}}\right)^2.
\end{eqnarray*}
Writing 
$$
U_n^{-3}=\left(\frac{U_n}{d_n}\right)^{-3}d_n^{-3}
$$
and going over all the five possible values of $d_n$ in \eqref{five guys}, we obtain that for each fixed $B>0$, $I^{(B)}(n)\to0$ as $n\to\infty.$  Let us now check the remaining quantity $I^{(R)}(n)$. Using the fact that $ab\le(a^2+b^2)/2$, we have
\begin{eqnarray}\label{two terms}\lefteqn{
I^{(R)}(n)}\nonumber\\
&&\le\frac{8}{U_n^2}\int_0^1\int_0^x(1+T_{[nx]}-T_{[ny]})^{d-1}
\left(\int_0^Ah^{(R)}(1+T_{[nx]}-T_{[ny]}+u)h^{(R)}(1+u)du\right)dydx\nonumber\\
&&\le8\left(\int_0^Ah^2(1+u)\ind{h(1+u)\ge B}du\right)\frac{1}{U_n^2}\int_{[0,1]^2}(1+\vert T_{[nx]}-T_{[ny]}\vert)^{d-1}dxdy.
\end{eqnarray}
Now,
$$
\mathbb{E}\left(\frac{1}{d_n^2}\int_{[0,1]^2}(1+\vert T_{[nx]}-T_{[ny]}\vert)^{d-1}dxdy\right)
$$
converges to zero when $1-d<\alpha\le1$, and bounded when $\alpha\le1-d$ and therefore it is bounded in all cases. The first bracket in \eqref{two terms} converges to zero as $B\to\infty$ since $h^2$ is integrable over $(0,A)$. Now again, using the fact that $d_n/U_n$ converges in probability and therefore bounded in probability, we conclude that  
$$
\underset{n\ge1}{\sup}P\left(I_n(R)\ge\epsilon)\right)\to0\textrm{ as  }B\to\infty,
$$
which in turns, concludes the proof that  that for any $\eta>0$, as $n\to\infty,$
$$
\limsup\int_{\vert f_n(s)\vert>\eta}f^2_n(s)ds=0
$$
in Probability.
\end{proof}\noindent
{\bf Proof of Lemma \ref{function g}}\label{proof of Tech lemma}
\begin{proof}
By \eqref{finite 2}, $h(1+t)$ is square integrable on every finite interval. Let $A>0$ be such that $h(1+t)\le C$ for all $t\ge A$. Putting $t=uv$, we get for $v\ge1$,
\begin{eqnarray*}\lefteqn{
\int_0^\infty \vert h(1+vu)h(v(1+u))\vert\left(\frac{1}{v}+u\right)^{d-1}(1+u)^{d-1}du
}\\
   &&\le\begin{cases}
       C\left[\displaystyle\int_0^A\vert h(1+t)\vert dt+C\frac{\Gamma(d)\Gamma(1-2d)}{\Gamma(1-d)}\right],&\textrm{if }v\ge A,\\\\
\displaystyle\int_0^{2A}h^2(1+t)dt+C^2\frac{\Gamma(d)\Gamma(1-2d)}{\Gamma(1-d)},&\textrm{if }v\le A,
   \end{cases}
\end{eqnarray*}
showing that $g(v)$ is defined and  uniformly bounded. Also, for $v\ge A$,
\begin{eqnarray*}
g(v)&=&\int_0^\infty h(1+vu)h(v(1+u))\left(\frac{1}{v}+u\right)^{d-1}(1+u)^{d-1}\ind{uv\le A}du\\
&&+\int_0^\infty  h(1+vu)h(v(1+u))\left(\frac{1}{v}+u\right)^{d-1}(1+u)^{d-1}\ind{uv\ge A}du.
\end{eqnarray*}
The first integral is bounded in absolute value by
$$
v^{-d}C\int_0^A \vert h(1+t)\vert dt\to0\qquad\textrm{as }v\to\infty.
$$
For fixed $u>0$, and as $v\to\infty$,  the integrand in the 2nd integral converges to 
$$
c^2(d)u^{d-1}(1+u)^{d-1}
$$
and is bounded by
$$
C^2u^{d-1}(1+u)^{d-1}
$$
which is integrable, 
so that by the dominated convergence Theorem, the 2nd integral converges to $\delta(d)$ defined in \eqref{delta d}. This concludes  the proof of all properties of $g$ defined in \eqref{finite d}.
\end{proof}
\bibliographystyle{apalike}
\bibliography{reference}

\end{document}